\documentclass[12pt]{amsart}

\setlength{\textwidth}{16truecm}\oddsidemargin=-0.1truecm
\evensidemargin=-0.1truecm \setlength{\textheight}{20cm}

\usepackage{amsmath,amssymb,amsthm,enumerate}

\setlength{\textwidth}{6.25in}
\setlength{\oddsidemargin}{0in}
\setlength{\evensidemargin}{0in}
\setlength{\textheight}{8.5in}

\newtheorem{theorem}{Theorem}[section]

\newtheorem{lemma}[theorem]{Lemma}
\newtheorem{proposition}[theorem]{Proposition}
\newtheorem{conjecture}[theorem]{Conjecture}

\theoremstyle{definition}

\theoremstyle{remark}
\newtheorem{remark}[theorem]{Remark}

\newcommand{\bC}{\mathbb C}

\newcommand{\bS}{\mathbb S}
\newcommand{\bB}{\mathbb B}

\newcommand{\bN}{\mathbb N}

\def\b{\beta}
\def\a{\alpha}

\newcommand{\norm}{|\!|}

\DeclareMathOperator{\CFT}{CFT}

\begin{document}
\title[]{On the HJY Gap Conjecture in CR geometry vs. the SOS Conjecture for polynomials}
\author{Peter Ebenfelt}
\address{Department of Mathematics, University of California at San Diego, La Jolla, CA 92093-0112}
\email{pebenfel@math.ucsd.edu}
\thanks{The author was supported in part by the NSF grant DMS-1301282.}
\begin{abstract} We show that the Huang-Ji-Yin (HJY) Gap Conjecture concerning CR mappings between spheres follows from a conjecture regarding Sums of Squares (SOS) of polynomials. The connection between the two problems is made by the CR Gauss equation and the fact that the former conjecture follows from the latter follows from a recent result, due to the author, on partial rigidity of CR mappings of strictly pseudoconvex hypersurfaces into spheres.
\end{abstract}

\thanks{2000 {\em Mathematics Subject Classification}. 32H02, 32V30}

\maketitle

\section{Introduction}

The purpose of this note is to explain how the Huang-Ji-Yin (HJY) Gap Conjecture concerning CR mappings between spheres \cite{HuangJiYin09} follows from a conjecture regarding Sums of Squares (SOS) of polynomials. The connection between the two problems is made by the CR Gauss equation (a well known fact) and the implication follows from a recent result, due to the author \cite{E12}, on partial rigidity ("flatness") of CR mappings of strictly pseudoconvex hypersurfaces into spheres.

The HJY Gap Conjecture concerns CR mappings $f$ of an open piece of the unit sphere $\bS^n\subset\bC^{n+1}$ into the unit sphere $\bS^N\subset \bC^{N+1}$ when the codimension $N-n$ lies in the integral interval $[0,D_n]$, where $D_n$ is a specific integer that depends on $n$ (with $D_n\sim \sqrt{2}n^{3/2}$, see below); here, we use the non-standard convention that the superscript $m$ on a real hypersurface $M^m\subset \bC^{m+1}$ refers to the CR dimension, and not the real dimension (which is $2m+1$). The mappings $f$ are assumed to be (sufficiently) smooth and, by results in \cite{Forstneric89} and \cite{CS90}, they therefore extend as rational maps without poles on $\overline{\bB_{n+1}}$, where $\bB_{n+1}\subset \bC^{n+1}$ denotes the unit ball. In particular, there is no loss of generality in considering globally defined CR mappings $f\colon \bS^{n}\to \bS^N$. The conjecture asserts that there is a collection of finitely many disjoint integral subintervals $I_1,\ldots, I_{\kappa_0}\subset [0,D_n]$ with the property that if the codimension $N-n$ belongs to one of these subintervals, $N-n\in I_\kappa=[a_\kappa,b_\kappa]$, then
\begin{equation}\label{f=TLf0}
f=T\circ L\circ f_0,
 \end{equation}
where $f_0$ is a CR mapping $S^n\to S^{N_0}$ for some $N_0$ with codimension $N_0-n<a_\kappa\leq N-n$ (in particular, then $N_0<N$), and where $L\colon S^{N_0}\to S^N$ is the standard linear embedding in which the last $N-N_0$ coordinates are zero and $T\colon S^N\to S^N$ is an automorphism of the target sphere $S^N$. It is well known and easy to see that the representation \eqref{f=TLf0} is equivalent to the statement that the image $f(\bS^n)$ is contained in an affine complex subspace $A^{N_0+1}$ of dimension $N_0+1$.

Before formulating the HJY Gap Conjecture more precisely, we must introduce the integral intervals $I_\kappa$. For $n\geq 2$, we define
\begin{equation}\label{Ik}
I_\kappa:=\left \{j\in \bN\colon (\kappa-1)n+\kappa\leq j\leq \sum_{i=0}^{\kappa-1}(n-i)-1=n+(n-1)+\ldots+(n-\kappa+1)-1\right\},
\end{equation}
for $\kappa=1,\ldots, \kappa_0$, where $\kappa_0=\kappa_0(n)$ is the largest integer $\kappa$ such that the integral interval $I_\kappa$ is non-trivial, i.e.,
\begin{equation}\label{k0}
(\kappa-1)n+\kappa\leq \sum_{i=0}^{\kappa-1}(n-i)-1.
\end{equation}
A simple calculation shows that $\kappa_0=\kappa_0(n)$ is increasing in $n$ (clearly, with $\kappa_0<n$) and grows like $\sqrt{2n}$. We have, e.g., $\kappa_0(2)=1$, $\kappa_0(4)=2$, and for $\kappa_0(n)\geq 3$, we need $n\geq 7$. For the integer $D_n$ referenced above, we can then take
$$
D_n=\kappa_0n-\frac{\kappa_0(\kappa_0-1)}{2}-1=\sqrt{2}n^{3/2}-n-\sqrt{2n}+O(1).
$$
Now, the conjecture made by X. Huang, S. Ji, and W. Yin in \cite{HuangJiYin09} can be formulated as follows:

\begin{conjecture}[HJY Gap Conjecture]\label{HJYConj} For $n\geq 2$, let $\kappa_0$ and $I_1,\ldots I_{\kappa_0}$ be as above and assume that $f\colon \bS^n\to \bS^{N}$ is a sufficiently smooth CR mapping. If the codimension $N-n\in I_\kappa$, then there exists an integer $n\leq N_0<N$ with
\begin{equation}\label{No-nest}
N_0-n\leq (\kappa-1)n-\kappa-1
 \end{equation}
and an affine complex subspace $A^{N_0+1}$ of dimension $N_0+1$ such that $f(\bS^n)\subset S^N\cap A^{N_0+1}$.
\end{conjecture}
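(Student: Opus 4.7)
The plan is to combine three ingredients flagged in the introduction: the CR Gauss equation, the (hypothesized) SOS Conjecture, and the author's partial rigidity theorem from \cite{E12}. First, using the standard normalization $|f(z)|^2 - 1 = (1 - |z|^2)\,\Phi_f(z,\bar z)$ on $\bC^{n+1}$, I would invoke the CR Gauss equation for the immersion $f(\bS^n) \hookrightarrow \bS^N$. This compares the pseudo-Hermitian structure induced by $f$ with the standard one on $\bS^n$, producing a Hermitian polynomial identity -- equivalently an SOS identity modulo the defining equation $|z|^2 = 1$ of the source sphere. Schematically, the identity takes the form
\[
P(z,\bar z) \equiv \sum_{\alpha} |u_\alpha(z)|^2 \pmod{1 - |z|^2},
\]
where $P$ is a universal polynomial depending only on $n$, and the number of linearly independent squares on the right-hand side equals (up to a shift of $n$) the rank of the second fundamental form $\mathrm{II}_f$ of the image $f(\bS^n) \subset \bS^N$.

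Next, I would feed this identity into the SOS Conjecture, which predicts that the minimum number of squares required to represent $P$ has gaps matched precisely to the intervals $I_\kappa$ of \eqref{Ik}: when $N - n \in I_\kappa$, the number of squares -- and hence $\rank \mathrm{II}_f$ -- must satisfy $\rank \mathrm{II}_f \leq (\kappa-1)n - \kappa - 1$. Finally, I would apply the partial rigidity (``flatness'') theorem of \cite{E12}, which says that a bound $\rank \mathrm{II}_f \leq r$ forces $f$ to be flat, so that its image lies in an affine complex subspace $A^{N_0+1}$ with $N_0 - n \leq r$. Substituting the bound from the SOS step yields exactly \eqref{No-nest}.

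The main obstacle is the SOS Conjecture itself, which remains open; it is the only non-elementary input. The first step uses only standard CR-geometric identities, and the partial rigidity step is the main result of \cite{E12}. The remaining work is essentially bookkeeping: one must carefully match the codimension intervals $[a_\kappa,b_\kappa]$ on the CR side to the rank bounds $(\kappa-1)n - \kappa - 1$ on the geometric side, which is precisely what the numerical definition of $I_\kappa$ in \eqref{Ik} is designed to accomplish.
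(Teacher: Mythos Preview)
Your three-step skeleton --- Gauss equation, SOS Conjecture, partial rigidity from \cite{E12} --- matches the paper, but two of the three steps are misstated in ways that hide the real work.

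First, the relevant rank is not $\rank\mathrm{II}_f$ but the linear rank $d=d_{l_0}$ of the \emph{total} second fundamental polynomial $\Omega(z)$, built from the second fundamental form \emph{together with all its covariant derivatives} (Section~\ref{GaussSec}). From $N-n\in I_\kappa$ one only gets $d\leq N-n\leq \sum_{i=0}^{\kappa-1}(n-i)-1$; the SOS Conjecture then forces $d$ down to the top of the $(\kappa-1)$st admissible interval in \eqref{e:ASOSbound}, i.e.\ $d\leq(\kappa-1)n$. It does \emph{not} give $d\leq(\kappa-1)n-\kappa-1$ as you write.

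Second, the partial rigidity theorem of \cite{E12} (Theorem~\ref{Thm1.1} here) does not say ``$\rank\mathrm{II}_f\leq r\Rightarrow N_0-n\leq r$.'' It requires, for each level $l$ of the filtration $0=d_1<d_2<\cdots<d_{l_0}=d$, an integer $k_l$ with $d_l-d_{l-1}<\sum_{j=0}^{k_l}(n-j)$, and concludes $N_0=n+d+k$ with $k=\sum_l k_l$. To reach the Gap Conjecture's bound one therefore needs $k\leq\kappa-1$, and this is the actual content of the proof, not bookkeeping: choose the $k_l$ minimal so that also $d_l-d_{l-1}\geq\sum_{j=0}^{k_l-1}(n-j)$, telescope $d=\sum_l(d_l-d_{l-1})\geq\sum_l\sum_{j=0}^{k_l-1}(n-j)$, and then use monotonicity of $j\mapsto n-j$ to shift the inner sums so they concatenate, yielding $d\geq\sum_{i=0}^{k-1}(n-i)$. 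Comparing with $d\leq\sum_{i=0}^{\kappa-1}(n-i)-1$ gives $k\leq\kappa-1$, whence $N_0-n=d+k\leq(\kappa-1)n+(\kappa-1)$ as desired.

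So your plan has the right shape, but the SOS step yields a weaker bound than you assume, and the slack is recovered only through the $k\leq\kappa-1$ estimate, which is precisely the part you dismissed.
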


The $\kappa$th integral interval $I_\kappa$ with the property described in the conjecture above is referred to as the $\kappa$th {\it gap}. We note that the existence of the first gap is the statement that if $f\colon \bS^n\to \bS^{N}$ is a sufficiently smooth CR mapping and $1\leq N-n\leq n-1$, then $f(\bS^n)\subset \bS^N\cap A^{n+1}$. Since $\bS^N\cap A^{n+1}$ is a sphere in the $(n+1)$-dimensional complex space $A^{n+1}$ and, thus, CR equivalent to $S^n\subset \bC^{n+1}$, we can write $f=T\circ L\circ f_0$, where $T$ and $L$ are as in \eqref{f=TLf0} and $f_0$ is a map of $S^n$ to itself. By work of Poincar\'e \cite{Poincare07}, Alexander \cite{Alexander74}, and Pinchuk \cite{Pincuk74}, $f_0$ is in fact an automorphism of $\bS^n$ (unless it is constant, of course) and by an appropriate choice of $T$, we can in fact make $f_0$ linear. The existence of the first gap, under the assumption that $f$ is real-analytic, was established by Faran in \cite{Faran86}; the smoothness required for this was subsequently lowered to $C^{N-n}$ by Forstneric \cite{Forstneric89} and then to $C^2$ by X. Huang in \cite{Huang99}. The existence of the second gap (when $n\geq 4$) and the third gap (when $n\geq 7$) was established under the assumption of $C^3$-smoothness of $f$ in \cite{HuangJiXu06} and \cite{HuangJiYin12}, respectively. The existence of the $\kappa$th gap for $3< \kappa\leq \kappa_0$ is an open problem at this time. It is, however, known \cite{JPDLebl09} that when the codimension $N-n$ is sufficiently large, then there are no more gaps (in the sense of Conjecture \ref{HJYConj}).

For the first three gaps, one can also classify the possible maps $f_0$ that appear in \eqref{f=TLf0}, as in the (very simple) Poincar\'e-Alexander-Pinchuk classification corresponding to the first gap described above; see \cite{HuangJi01}, \cite{Hamada05}, \cite{HuangJiYin12}. For the gaps beyond these, such a classification is most likely beyond what one can hope for at this time, at least for large $\kappa$. To the best of the author's knowledge, there is no conjecture as to what such "model" maps would be for general $\kappa$.

For a CR mapping $f\colon \bS^n\to \bS^N$, there is a notion of the CR second fundamental form of $f$ and its covariant derivatives, and if we form the corresponding sectional curvatures (defined more precisely in the next section), then we obtain a collection of polynomials $\Omega^1(z), \ldots,\Omega^{N-n}(z)$ in the variables $z=(z^1,\ldots,z^n)\in\bC^n$, whose coefficients consist of components of the second fundamental form and its covariant derivatives up to some finite order (bounded from above by the codimension $N-n$); we shall refer to the polynomial mapping $\Omega=(\Omega^1,\ldots\Omega^{N-n})$ as the total second fundamental polynomial. These polynomials satisfy a Sums Of Squares (SOS) identity as a consequence of a CR version of the Gauss equation. The SOS identity has the following form
\begin{equation}\label{SOSid}
\sum_{j=1}^{N-n}|\Omega^j(z)|^2 = A(z,\bar z)\sum_{i=1}^n|z^i|^2,
\end{equation}
where $A(z,\bar z)$ is a Hermitian (real-valued) polynomial in $z$ and $\bar z$. To simplify the notation, for a polynomial mapping $P(z)=(P^1(z),\ldots,P^q(z))$ we shall write $\norm P(z)\norm^2$ for the SOS of moduli of the components, i.e.,
\begin{equation}
\norm P(z)\norm^2:=\sum_{k=1}^q|P^k(z)|^2.
\end{equation}
The number $q$ of terms in the norm will differ depending on the mapping in question, but will be clear from the context. Using this notation, the identity \eqref{SOSid} can be written in the following way:
\begin{equation}\label{SOSid'}
\norm\Omega(z)\norm^2=A(z,\bar z)\norm z\norm^2.
\end{equation}
The polynomial $A(z,\bar z)$ is in principle computable from $f$, but useful properties of $A$ seem difficult to extract directly in this way, and often it suffices to know that $\Omega$ satisfies an SOS identity of this form, for some Hermitian polynomial $A$. SOS identities of the form \eqref{SOSid'} appear in many different contexts, and there is an abundance of literature considering various aspects of such identities. We mention here only a few, and only ones with a connection to CR geometry and complex analysis: e.g., \cite{Quillen68}, \cite{CatlinDangelo96}, \cite{CatlinJPD99}, \cite{EHZ05}, \cite{JPDLebl09}, \cite{JPD11}, \cite{HuangY13}, \cite{GrHa13}, \cite{GruLV14}, \cite{E15}, and refer the reader to these papers for further connections and references to the literature. The reader is especially referred to the paper  \cite{JPD11} by D'Angelo, which contains an excellent discussion of SOS identities and positivity conditions.

We shall here be concerned with a very specific property of polynomial maps $\Omega$ that satisfy \eqref{SOSid'}, namely the possible linear ranks that can occur. For a polynomial mapping $P(z)=(P^1(z),\ldots,P^q(z))$, we define its {\it linear rank} to be the dimension of the complex vector space $V_P$ spanned by its components, in the polynomial ring $\bC[z]$. The main result in this note is that the HJY Gap Conjecture will follow from the following conjecture regarding the possible linear ranks of polynomial mappings $P(z)$ that satisfy an SOS identity:

\begin{conjecture}[SOS Conjecture]\label{SOSConj} Let $P(z)=(P^1(z),\ldots,P^q(z))$ be a polynomial mapping in $z=(z^1,\ldots, z^n)\in \bC^n$, and assume that there exists a Hermitian polynomial $A(z,\bar z)$ such that the SOS identity
\begin{equation}\label{SOSidP}
\norm P(z)\norm^2=A(z,\bar z)\norm z\norm^2
\end{equation}
holds. If $r$ denotes the linear rank of $P(z)$, then either
\begin{equation}\label{rmax}
r\geq (\kappa_0+1)n-\frac{\kappa_0(\kappa_0+1)}{2}-1,
\end{equation}
where $\kappa_0$ is the largest integer $\kappa$ such that \eqref{k0} holds, or there exists a integer $1\leq \kappa\leq \kappa_0<n$ such that
\begin{equation}\label{e:ASOSbound}
\sum_{i=0}^{\kappa-1} (n-i)=n\kappa-\frac{\kappa(\kappa-1)}{2}\leq r\leq \kappa n.
\end{equation}
\end{conjecture}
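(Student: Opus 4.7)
My plan is to reduce the conjecture to a combinatorial statement about products of subspaces in the symmetric algebra by linear forms. Expanding the identity $\norm P(z)\norm^2 = A(z,\bar z)\norm z\norm^2$ bihomogeneously and matching components of bidegree $(d,d)$ yields $\norm P_{(d)}(z)\norm^2 = A_{(d-1,d-1)}(z,\bar z)\norm z\norm^2$ for each homogeneous component $P_{(d)}$ of $P$ of degree $d$, reducing to the homogeneous case. Since polynomials of distinct homogeneous degrees are linearly independent, $r = \sum_d r_{(d)}$, so it suffices to classify the admissible ranks for a homogeneous $P$ of each degree $d$ and then verify that the sums produced in this way remain in the allowed set---the safety valve $r \geq (\kappa_0+1)n - \kappa_0(\kappa_0+1)/2 - 1$ absorbs most sums that could otherwise fall between consecutive intervals.

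For homogeneous $P$ of degree $d$, encode $P$ via the PSD Hermitian Gram matrix $C=(C_{\alpha\beta})$ defined by $\norm P(z)\norm^2 = \sum_{\alpha,\beta}C_{\alpha\beta}z^\alpha\bar z^\beta$; then $\rk(C) = r$, and the SOS identity is equivalent to $C$ lying in the image of the multiplication-by-$\norm z\norm^2$ operator $M$ acting on Hermitian coefficient matrices. In the favorable case where $A$ itself admits a sum-of-squares representation $A = \sum_{i=1}^\kappa |L_i|^2$ with the $L_i$ spanning a $\kappa$-dimensional subspace $W\subset\mathrm{Sym}^{d-1}(\bC^n)^*$, a direct computation gives $r = \dim(W\cdot V_1)$, where $V_1 := (\bC^n)^*$ is the space of linear forms. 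The conjecture in this favorable case then reduces to the following combinatorial bound, which I would take as the main lemma: for every $\kappa$-dimensional subspace $W\subset\mathrm{Sym}^{d-1}(\bC^n)^*$ with $1\le\kappa\le n$,
\[
n\kappa - \binom{\kappa}{2}\ \le\ \dim(W\cdot V_1)\ \le\ n\kappa.
\]

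The upper bound is immediate. The lower bound is the main obstacle: it is strictly stronger than Macaulay's classical bound on Hilbert functions, and is equivalent to controlling the Koszul-type syzygy module
\[
K_W\ =\ \Big\{(L_1,\ldots,L_\kappa)\in V_1^\kappa\ :\ \textstyle\sum_j L_j w_j = 0\ \text{in}\ \mathrm{Sym}^d\Big\}
\]
(for a basis $\{w_j\}$ of $W$) by $\dim K_W \le \binom{\kappa}{2}$. I would attempt this by induction on $\kappa$: the base case $\kappa=1$ is trivial (multiplication by a nonzero polynomial is injective), and the base degree $d=2$ is a direct calculation in which the bound is actually an equality; in the inductive step I would choose a generic hyperplane $H\subset V_1$, pass to the restriction $\bar W\subset\mathrm{Sym}^{d-1}(H)^*$, and compare syzygies of $W$ to syzygies of $\bar W$ together with a vertical correction arising from the piece of $W$ divisible by a transverse linear form, in the spirit of Green's hyperplane restriction theorem. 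A secondary obstacle concerns the unfavorable case where $A\ge 0$ is not itself a sum of squares, so that $[A]$ fails to be PSD even though $M([A])$ is: the reduction to the subspace $W$ breaks down there, and one must invoke a positivity-stabilization result of Quillen or Catlin--D'Angelo type (exploiting that multiplication by $\norm z\norm^2$ effectively "SOS-izes" nonnegative Hermitian polynomials) to produce an SOS representation of $A$ with rank comparable to the intrinsic rank of $M([A])$, returning to the combinatorial setting above.
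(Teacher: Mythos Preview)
The statement you are attempting to prove is not a theorem in the paper but an open \emph{conjecture}; the paper offers no proof of it. What the paper does prove is Theorem~\ref{MainThm}, namely that \emph{if} the SOS Conjecture holds, then the HJY Gap Conjecture follows. So there is no ``paper's own proof'' to compare against here. That said, your proposal can be assessed against what the paper records as known and as open.

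Your ``favorable case'' (where $A$ is itself a sum of squares, $A=\sum_{i=1}^\kappa |L_i|^2$) is exactly the case already handled by Grundmeier--Halfpap \cite{GrHa13}, which the paper cites explicitly; their argument uses Macaulay's estimate and yields the bound $n\kappa-\binom{\kappa}{2}\le \dim(W\cdot V_1)\le n\kappa$ that you identify as the main lemma. So this portion of your plan is not new, and the inductive/hyperplane-restriction argument you sketch is in the same circle of ideas.

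The genuine gap is in your ``unfavorable case,'' where $A=\norm F\norm^2-\norm G\norm^2$ with $G\not\equiv 0$. This is precisely the content of the paper's Weak (Alternative) SOS Conjecture~\ref{SOSConj2}, which the author presents as open and singles out as the essential difficulty. Your proposed fix---invoking a Quillen or Catlin--D'Angelo stabilization to ``SOS-ize'' $A$---does not work for the purpose at hand. Those theorems assert that for a bihomogeneous $A(z,\bar z)\ge 0$ there exists some (possibly large) $k$ with $A(z,\bar z)\norm z\norm^{2k}$ an SOS; they give no control over $k$, and each extra factor of $\norm z\norm^2$ typically increases the linear rank. What you need is information about the linear rank $r$ of the \emph{given} SOS $A(z,\bar z)\norm z\norm^2$, and stabilization produces a different SOS with a different (larger) rank. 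Moreover, the paper notes the range $\dim V_{F\otimes z}/V_{G\otimes z}\le r\le \dim V_{F\otimes z}$ and emphasizes that cancellations at $t=1$ in the family $A_t=\norm F\norm^2-t\norm G\norm^2$ can drive $r$ strictly below $\dim V_{F\otimes z}$; your reduction to a subspace $W$ and $\dim(W\cdot V_1)$ simply does not see this phenomenon. In short, the favorable case is known, and your treatment of the unfavorable case does not supply the missing idea.
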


\begin{remark} {\rm The integer $\kappa_0$ is also the integer for which the integral intervals in $\kappa$, defined by \eqref{e:ASOSbound} start overlapping for $\kappa=\kappa_0+1$.
}
\end{remark}

The main result in this note is that this SOS Conjecture implies the HJY Gap Conjecture:

\begin{theorem}\label{MainThm} If the SOS Conjecture $\ref{SOSConj}$ holds, then the HJY Gap Conjecture $\ref{HJYConj}$ holds.
\end{theorem}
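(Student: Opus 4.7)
The plan is to compose the two tools highlighted in the introduction: the CR Gauss equation, which turns the smooth CR map $f$ into polynomial data satisfying an SOS identity, and the author's partial rigidity theorem from \cite{E12}, which converts a linear-rank bound on that data into containment of $f(\bS^n)$ in an affine complex subspace. Given a sufficiently smooth CR map $f \colon \bS^n \to \bS^N$ with $N - n \in I_\kappa$, I form the total second fundamental polynomial $\Omega(z) = (\Omega^1(z), \ldots, \Omega^{N-n}(z))$, which by \eqref{SOSid'} satisfies $\norm\Omega(z)\norm^2 = A(z,\bar z)\norm z\norm^2$ for some Hermitian $A$. Since $\Omega$ has $N - n$ components, its linear rank $r$ satisfies
\[
r \leq N - n \leq n\kappa - \frac{\kappa(\kappa-1)}{2} - 1,
\]
which is strictly below the left endpoint $n\kappa - \kappa(\kappa-1)/2$ of the $\kappa$-th SOS-allowed interval in \eqref{e:ASOSbound}.

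The next step is to apply Conjecture~\ref{SOSConj} to $\Omega$. Using $\kappa \leq \kappa_0 < n$, the next SOS-allowed interval begins at $(\kappa+1)n - \kappa(\kappa+1)/2 = n\kappa - \kappa(\kappa-1)/2 + (n-\kappa) > r$, so $r$ cannot land in a higher allowed interval, and the same inequality iterated up to $\kappa_0$ rules out the large-rank alternative \eqref{rmax}. Consequently the SOS Conjecture forces $r$ into the $\kappa'$-th SOS-allowed interval for some $\kappa' \leq \kappa - 1$, giving the key estimate $r \leq \kappa' n \leq (\kappa - 1) n$. Feeding this rank bound into the partial rigidity theorem from \cite{E12}, which quantitatively controls the dimension of the minimal affine complex subspace $A^{N_0+1} \subset \bC^{N+1}$ containing $f(\bS^n)$ in terms of the linear rank of $\Omega$, yields an integer $N_0$ with $n \leq N_0 < N$ obeying \eqref{No-nest} and $f(\bS^n) \subset \bS^N \cap A^{N_0+1}$, which is exactly the HJY conclusion.

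The main obstacle is not conceptual but arithmetic: the two heavy inputs (the CR Gauss equation and \cite{E12}) do essentially all the real work, and the SOS step amounts to the elementary observation $r \leq N - n$ coupled with the fact that the SOS forbidden range containing $I_\kappa$ is wider than $I_\kappa$ itself. The delicate part is verifying that the quantitative form of the partial rigidity theorem, evaluated at the upper endpoint $\kappa' n$ of the SOS-allowed interval that contains $r$, produces precisely the bound $N_0 - n \leq (\kappa-1)n - \kappa - 1$ appearing in \eqref{No-nest}. Because the endpoints in Conjecture~\ref{SOSConj} are engineered so that the SOS forbidden ranges line up with the HJY gaps $I_\kappa$ after passage through the partial rigidity result, this match is built into the formulation of the SOS Conjecture, and the implication reduces to tracking the constants carefully.
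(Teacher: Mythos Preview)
Your overall architecture is exactly the paper's: Gauss equation $\Rightarrow$ SOS identity for $\Omega$, then the SOS Conjecture forces the linear rank $r=d$ of $\Omega$ down to $d\le(\kappa-1)n$, and finally the partial rigidity theorem of \cite{E12} puts $f(\bS^n)$ in an affine subspace. The SOS arithmetic you sketch (that $r\le N-n$ already lands below the $\kappa$th allowed interval, hence $r\le(\kappa-1)n$) is also correct.

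The gap is in the last step. The partial rigidity theorem does \emph{not} bound $N_0$ in terms of the linear rank $d$ alone. Its output is $N_0=n+d+k$, where $k=\sum_{l=2}^{l_0}k_l$ and each $k_l$ is determined by the successive jumps $d_l-d_{l-1}$ of the filtration $E_2\subset E_3\subset\cdots$ (the $k_l$ is the smallest integer with $d_l-d_{l-1}<\sum_{j=0}^{k_l}(n-j)$). So from your bound $d\le(\kappa-1)n$ you only get $N_0-n\le(\kappa-1)n+k$, and you still owe the estimate $k\le\kappa-1$. That estimate is not automatic and does not follow from the SOS Conjecture; it is a separate combinatorial argument. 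In the paper this is the ``claim'' \eqref{kclaim}: choosing the $k_l$ minimal gives the reverse inequalities $d_l-d_{l-1}\ge\sum_{j=0}^{k_l-1}(n-j)$, and telescoping these, using that $j\mapsto n-j$ is nonincreasing to shift the inner sums, yields $d\ge\sum_{i=0}^{k-1}(n-i)$. Comparing with the a priori bound $d\le N-n\le\sum_{i=0}^{\kappa-1}(n-i)-1$ then forces $k\le\kappa-1$.

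In short, your sentence ``Feeding this rank bound into the partial rigidity theorem \ldots\ yields an integer $N_0$ \ldots\ obeying \eqref{No-nest}'' hides a genuine piece of content: you must control the extra summand $k$ coming from the filtration, and this requires the telescoping/shifting argument above, not merely tracking constants.
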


The connection between the two conjectures is explained in Section \ref{GaussSec}. The conclusion of Theorem \ref{MainThm} will then be derived, in Section \ref{Proof}, as a consequence of Theorem 1.1 in \cite{E12}, reproduced here in a special case as Theorem \ref{Thm1.1}.

\subsection{Results on the SOS Conjecture; reduction to an alternative SOS Conjecture} While the literature on SOS of polynomials is vast, as mentioned above, there are very few results that have a direct impact on the SOS Conjecture \ref{SOSConj}. To the best of the author's knowledge, the only general result on this conjecture is what is now known as Huang's Lemma, which first appeared in \cite{Huang99}, and which establishes the first gap in the SOS Conjecture: If $r<n$, then $A\equiv 0$, and, hence $r=0$. Huang used this result in \cite{Huang99} to give a new proof of  Faran's result regarding existence of the first gap in the Gap Conjecture \ref{HJYConj}, and to show that it suffices to assume that the mappings are merely $C^2$-smooth.

In another recent paper \cite{GrHa13} by Grundmeier and Halfpap, the SOS Conjecture \ref{SOSConj} was established in the special case where $A(z,\bar z)$ is itself an SOS, i.e.,
\begin{equation}\label{ASOS}
A(z,\bar z)=\norm F(z)\norm^2,
\end{equation}
for some polynomial mapping $F(z)$. The integer $\kappa$ in the conjecture in this case is the linear rank of the polynomial mapping $F(z)$; it is assumed in \cite{GrHa13} that the components of $P(z)$ are homogeneous polynomials, but a simple homogenization argument can remove this assumption (cf. \cite{E15}). The Grundmeier-Halfpap result by itself does not seem to have any direct implications for the Gap Conjecture \ref{HJYConj}, as the needed information regarding the Hermitian polynomial $A(z,\bar z)$ seems difficult to glean from the mapping $f$, but it offers the opportunity to formulate an alternative, arguably simplified version of the SOS conjecture, which would imply Conjecture \ref{SOSConj} as a consequence of the Grundmeier-Halfpap result. We shall formulate this alternative SOS Conjecture in what follows.

We observe that, by standard linear algebra arguments, any Hermitian polynomial $A(z,\bar z)$ can be expressed as a difference of squared norms of polynomial mappings,
\begin{equation}\label{DOS}
A(z,\bar z)=\norm F(z)\norm^2-\norm G(z)\norm^2,
\end{equation}
where $F=(F^1,\ldots,F^{q_+})$ and $G=(G^1,\ldots, G^{q_-})$ are mappings whose components are polynomials in $z$. We may further assume that the complex vector spaces $V_F$, $V_G$ spanned by their respective components have dimensions $q_+$, $q_-$, respectively (i.e., the components of $F$ and $G$ are linearly independent, so their linear ranks are $q_+$, $q_-$, respectively), and that $V_F\cap V_G=\{0\}$. The Grundmeier-Halfpap result proves Conjecture \ref{SOSConj} in the special case where $G=0$. Thus, it suffices to prove the conjecture in the case where $G\neq 0$. In this case, the product $A(z,\bar z)\norm z\norm^2$ need of course not be an SOS, so this must be assumed. An optimistic view of the situation in the conjecture would be to hope that the "gaps" in linear ranks that are predicted in \eqref{e:ASOSbound} can only occur when $G=0$, and when $G\neq 0$, but $A(z,\bar z)\norm z\norm^2$ is still an SOS, the lower bound \eqref{rmax} always holds. The author has reasons to believe that this optimistic view is indeed what happens, though at this point the reasons are too vague to try to explain in this note. In any case, the following "weak", or alternative form of the SOS Conjecture, if true, then implies the SOS Conjecture \ref{SOSConj}, in view of the Grundmeier-Halfpap result.

\begin{conjecture}[Weak (Alternative) SOS Conjecture]\label{SOSConj2} Let $P(z)=(P^1(z),\ldots,P^q(z))$ be a polynomial mapping in $z=(z^1,\ldots, z^n)\in \bC^n$, and assume that there exists a Hermitian polynomial $A(z,\bar z)$ of the form \eqref{DOS} such that the SOS identity \eqref{SOSid} holds. If $r$ denotes the linear rank of $P(z)$ and if the polynomial mapping $G$ in \eqref{DOS} is not identically zero, then \eqref{rmax} holds.
\end{conjecture}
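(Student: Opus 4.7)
My plan would be to combine the absorption trick with an inductive strengthening of Huang's Lemma. The starting point is the identity
\begin{equation*}
\norm P(z)\norm^2 + \norm z\norm^2 \norm G(z)\norm^2 = \norm z\norm^2 \norm F(z)\norm^2,
\end{equation*}
obtained by moving the negative part of \eqref{DOS} to the left. Introducing the enlarged polynomial mapping $\widetilde P:=(P^1,\dots,P^q,\, z^i G^j(z))_{1\le i\le n,\, 1\le j\le q_-}$, one has $\norm \widetilde P(z)\norm^2 = \norm F(z)\norm^2\,\norm z\norm^2$, a pure SOS identity to which the Grundmeier-Halfpap theorem applies with $\kappa := \rank F$. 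This gives immediate control on $\tilde r := \rank \widetilde P$, but the desired conclusion is a lower bound on $r := \rank P = \tilde r - \dim\bigl(V_{z\otimes G}/(V_P \cap V_{z\otimes G})\bigr)$.

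\textbf{Key steps.} To transfer bounds from $\tilde r$ to $r$, I would proceed inductively, ruling out one admissible interval at a time. The base case is Huang's Lemma: if $r < n$, then $A \equiv 0$, hence $\norm F(z)\norm^2 = \norm G(z)\norm^2$ as Hermitian polynomials, and the orthogonality $V_F \cap V_G = \{0\}$ collapses $F = G = 0$, contradicting $G \not\equiv 0$. For the inductive step excluding $r \in [n\mu - \mu(\mu-1)/2,\, n\mu]$ with $\mu \le \kappa_0$, I would polarize \eqref{SOSidP} in $\bar z$ and restrict to generic complex subspaces, in the spirit of the Huang-Ji-Xu and Huang-Ji-Yin arguments for the second and third gaps of the HJY Conjecture, to extract algebraic constraints on the triple $(P,F,G)$ that either collapse $G$ or force $z \cdot V_G \subseteq V_P$. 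Once the latter holds, one has $\tilde r = r$, and Grundmeier-Halfpap applied to $\widetilde P$ promotes $r$ to the next admissible range. Iterating up to $\mu = \kappa_0$ would leave only the top alternative \eqref{rmax}.

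\textbf{Main obstacle.} The decisive difficulty is that the Huang-type rigidity lemmas on which such an induction would rely are currently available only under strong positivity assumptions on $A(z,\bar z)$ --- most notably the pure SOS case treated in \cite{GrHa13}. When $G \not\equiv 0$, the Hermitian polynomial $A$ is sign-indefinite, and the boundary/positivity analysis that underlies the higher gap theorems for CR mappings breaks down: one cannot simply evaluate on the sphere and exploit the sign of $A$. A successful proof would require an algebraic substitute for this positivity input, likely a careful ideal-theoretic study of the module generated by $V_G$, combined in an essential way with the orthogonality $V_F \cap V_G = \{0\}$. I would expect this to be precisely the step where the plan stalls in practice, reflecting the fact that Conjecture \ref{SOSConj2} remains open.
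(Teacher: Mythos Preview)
The statement you are attempting to prove is Conjecture~\ref{SOSConj2}, which the paper explicitly presents as an \emph{open} conjecture; the paper offers no proof of it. Indeed, the surrounding discussion makes clear that the author regards it as unresolved (``the author has reasons to believe that this optimistic view is indeed what happens, though at this point the reasons are too vague to try to explain''). There is therefore no ``paper's own proof'' against which to compare your attempt. You yourself recognize this in your final paragraph, so your write-up is best read not as a claimed proof but as a strategy sketch with an honestly identified obstruction.

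On the content of your sketch: the absorption trick and the base case are sound. Moving the $\norm G\norm^2$ term to the left to obtain a pure SOS identity $\norm\widetilde P\norm^2=\norm F\norm^2\norm z\norm^2$ is exactly the kind of maneuver the paper hints at when it observes $V_{G\otimes z}\subset V_{F\otimes z}$ and records the two-sided estimate \eqref{specrk}; your formula $r=\tilde r-\dim\bigl(V_{z\otimes G}/(V_P\cap V_{z\otimes G})\bigr)$ is the precise quantitative version of that remark. Your base case via Huang's Lemma is also correct: $A\equiv 0$ together with $V_F\cap V_G=\{0\}$ and linear independence of the components forces $F=G=0$.

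The genuine gap is the one you flag yourself. The inductive step is not a proof but a wish list: ``polarize and restrict to generic subspaces in the spirit of \cite{HuangJiXu06}, \cite{HuangJiYin12}'' names a family of techniques, not an argument, and those techniques rely in essential ways on positivity or on CR-geometric normalizations that have no obvious analogue once $A$ is sign-indefinite. Moreover, even granting the dichotomy you posit (either $G$ collapses or $V_{z\otimes G}\subset V_P$), the conclusion ``Grundmeier--Halfpap promotes $r$ to the next admissible range'' is not quite right: their theorem pins $\tilde r$ to the \emph{specific} interval indexed by $\kappa=\rank F$, not to an arbitrary higher one, so you would additionally need to control $\rank F$ along the induction. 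In short, your outline is a reasonable heuristic framing of why the conjecture is plausible, and it aligns with the paper's own informal discussion around \eqref{specrk}, but it does not constitute a proof, and the paper does not claim one either.
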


One of the main difficulties in Conjecture \ref{SOSConj2} when $G\neq 0$ comes from the fact that it seems hard to characterize when $A(z,\bar z)\norm z\norm^2$ is in fact an SOS of the form \eqref{SOSid}. The reader is referred to, e.g., \cite{JPDVar04}, \cite{JPD11} for discussions related to this difficulty. We can mention here that a necessary condition for an SOS identity \eqref{e:ASOSbound} to hold is that $V_{G\otimes z}\subset V_{F\otimes z}$, where the tensor product of two mappings $F\otimes H$ is defined as the mapping whose components comprise all the products of components $F^jH^k$. From this one can easily see that the linear rank $r=\dim_\bC V_P$ in Conjecture \ref{SOSConj2} must satisfy
\begin{equation}\label{specrk}
\dim_\bC V_{F\otimes z}/V_{G\otimes z}\leq r\leq \dim_\bC V_{F\otimes z}.
\end{equation}
The lower bound can only be realized if a maximum number of "cancellations" occur. If we consider the 1-parameter family of Hermitian polynomials
$$
A_t(z,\bar z):=\norm F(z)\norm^2-t\norm G(z)\norm^2
$$
for $0\leq t\leq 1$, where $A(z,\bar z)=A_1(z,\bar z)$ satisfies an SOS identity \eqref{e:ASOSbound}, then clearly $A_t(z,\bar z)\norm z\norm^2$ is an SOS for each $0\leq t\leq 1$ (since $A_t(z,\bar z)=A_1(z,\bar z)+(1-t)\norm G(z)\norm^2$). One can show that "cancellations" causing strict inequality in the upper bound in \eqref{specrk} do not occur for general $t$ in this range, and the linear rank of $A_t(z,\bar z)\norm z\norm^2$ for such $t$ is then $r=\dim_\bC V_{F\otimes z}$. Nevertheless, for the given $A(z,\bar z)=A_1(z,\bar z)$, all we can say seems to be that the estimate \eqref{specrk} holds.

\section{The second fundamental form and the Gauss equation}\label{GaussSec}

We shall utilize E. Cartan's differential systems ("moving frames") approach to CR geometry, as well as S. Webster's theory of psuedohermitian structures. We will follow the set-up and notational conventions introduced in \cite{BEH08} (see also \cite{E15} and \cite{EHZ04}). We shall summarize the notation very briefly here, but refer the reader to \cite{BEH08} (which, on occasion, refers to \cite{EHZ04}) for all details. We shall also from the beginning specialize the general set-up to the special case of CR mappings between spheres, which simplifies matters significantly due to the vanishing of the CR curvature tensor of the sphere. Thus, let $f\colon \bS^n\to \bS^N$ be a smooth CR mapping with $2\leq n\leq N$. For a point $p_0\in \bS^n$, we may choose local adapted (to $f$), admissible (in the sense of Webster \cite{Webster78}) CR coframes $(\theta,\theta^\alpha,\theta^{\bar\alpha})$ on $\bS^n$ near $p_0$ and $(\hat \theta,\hat \theta^A,\hat \theta^{\bar A})$ on $\bS^N$ near $\hat p_0:=f(p_0)$, where the convention in \cite{BEH08} dictates that Greek indices, $\alpha$, etc., range over $\{1,\ldots, n\}$,  capital Latin letters, $A$, etc., range over $\{1,\ldots N\}$, and where barring an index on a
 previously defined object corresponds to complex conjugation, e.g., $\theta^{\bar\alpha}:=\overline{\theta^\alpha}$. Being adapted means that
\begin{equation}
f^*\hat\theta=\theta,\quad
f^*\hat\theta^\alpha=\theta^\alpha,\quad f^*\hat\theta^a=0,
\end{equation}
where we have used the further convention that lower case Latin letters $a$, etc., run over the indices $\{N-n+1,\ldots, N\}$. Thus, in particular, $f$ is a (local) pseudohermitian mapping between the (local) pseudohermitian structures obtained on $\bS^n$ and $\bS^N$ by fixing the contact forms $\theta$ and $\hat \theta$ near $p_0$ and $\hat p_0$, respectively. We denote by $g_{\alpha\bar\beta}$, $\hat g_{A\bar B}$ the respective Levi forms (which can, and later will be both assumed to be the identity), and by $\omega_\alpha{}^\beta$, $\hat \omega_A{}^B$ the Tanaka-Webster connection forms. We shall pull all forms and tensors back to $\bS^n$ by $f$, and for convenience of notation, we shall simply denote by $\hat\omega_A{}^B$ the pulled back form $f^*\hat\omega_A{}^B$, etc. Moreover, the fact that the two coframes are adapted implies that we can drop the $\hat{}$ on the pullbacks to $\bS^n$ without any risk of confusion; in other words, we have, e.g., $\omega_\alpha{}^\beta=\hat\omega_\alpha{}^\beta$ and $g_{\alpha\bar\beta}=\hat g_{\alpha\bar\beta}$ (we repeat here that we refer to \cite{BEH08} and \cite{EHZ04} for the details), and of course $\omega_\alpha{}^a$, e.g., can have only one meaning.

The collection of 1-forms $(\omega_{\alpha}^{\:\:\:a})$ on $\bS^n$ defines the \emph{second fundamental form} of the mapping $f$, denoted $\Pi_f\colon T^{1,0}\bS^n\times T^{1,0}\bS^n\to T^{1,0}\bS^N/f_*T^{1,0}\bS^n$, as described in \cite{BEH08}. We recall from there that
\begin{equation}\label{SFF1}
\omega_{\alpha}^{\:\:\:a} = \omega_{\alpha \:\:\: \beta}^{\:\:\:a}\theta^{\beta}, \qquad \omega_{\alpha \:\:\: \beta}^{\:\:\:a} = \omega_{\beta \:\:\: \alpha}^{\:\:\:a}.
\end{equation}
If we identify the CR-normal space $T_{f(p)}^{1,0}\bS^N/f_*T_{p}^{1,0}\bS^n$, also denoted by $N_{p}^{1,0}{\bS^n}$, with $\mathbb{C}^{N-n}$, then we may identify $\Pi_f$ with the $\bC^{N-n}$-valued, symmetric $n\times n$ matrix $(\omega_{\alpha}{}^a{}_ \beta)_{a=n+1}^{N}$. We shall not be so concerned with the matrix structure of this object, and consider $\Pi_f$ as the collection, indexed by $\alpha, \beta$, of its component vectors $(\omega_{\alpha}{}^a{}_ \beta)_{a=n+1}^{N}$ in $\mathbb{C}^{N-n}$.  By  viewing the second fundamental form as a section over $\bS^n$ of the bundle
$(T^*)^{1,0}\bS^n\otimes N^{1,0}{\bS^n} \otimes (T^*)^{1,0}\bS^n$, we may use the pseudohermitian connections on $\bS^n$ and $\bS^N$ to define the covariant differential
\begin{equation*}
\nabla \omega_{\alpha\:\:\beta}^{\:\:a} = d\omega_{\alpha\:\:\beta}^{\:\:a} - \omega_{\mu\:\:\beta}^{\:\:a}\omega_{\alpha}^{\:\:\mu} + \omega_{\alpha\:\:\beta}^{\:\:b}\omega_{b}^{\:\:a} - \omega_{\alpha\:\:\mu}^{\:\:a}\omega_{\beta}^{\:\:\mu}.
\end{equation*}
We write $\omega_{\alpha\:\:\beta ; \gamma}^{\:\:a}$ to denote the component in the direction $\theta^{\gamma}$ and define higher order derivatives inductively as:
\begin{equation*}
\nabla \omega_{\gamma_{1}\:\:\gamma_{2};\gamma_{3}\ldots\gamma_{j}}^{\:\:a} = d\omega_{\gamma_{1}\:\:\gamma_{2};\gamma_{3}\ldots\gamma_{j}}^{\:\:a} + \omega_{\gamma_{1}\:\:\gamma_{2};\gamma_{3}\ldots\gamma_{j}}^{\:\:b}\omega_{b}^{\:\:a} - \sum_{l=1}^{j}\omega_{\gamma_{1}\:\:\gamma_{2};\gamma_{3}\ldots\gamma_{l-1}\mu
\gamma_{l+1}\ldots\gamma_{j}}^{\:\:a}\omega_{\gamma_{l}}^{\:\:\mu}.
\end{equation*}
A tensor
$T_{\a_1\ldots\a_r\bar\b_1\ldots\bar\b_s}{}^{a_1\ldots a_t\bar
b_1\ldots\bar b_q}$, with $r,s\geq1$, is called  {\em conformally flat} if it is a
linear combination of $g_{\a_i\bar\b_j}$ for $i=1,\ldots,r$,
$j=1,\ldots,s$, i.e.
\begin{equation}
T_{\a_1\ldots\a_r\bar\b_1\ldots\bar\b_s}{}^{a_1\ldots a_t\bar
b_1\ldots\bar b_q}=\sum_{i=1}^r\sum_{j=1}^s g_{\alpha_i\bar\beta_j}
(T_{ij})_{\a_1\ldots\widehat{\a_i}\ldots
\a_r\bar\b_1\ldots\widehat{\bar\b_j}\ldots\ldots\bar\b_s}{}^{a_1\ldots
a_t\bar b_1\ldots\bar b_q},
\end{equation}
where e.g.\ $\widehat{\a}$ means omission of that factor. (A similar definition can be made for tensors with different
orderings of indices.) The following observation gives a motivation for this definition. Let $T_{\a_1\ldots\a_r\bar\b_1\ldots\bar\b_s}{}^{a_1\ldots a_t\bar b_1\ldots\bar b_q}$ be a tensor, symmetric in $\alpha_1,\ldots,\alpha_r$ as well as in $\beta_1,\ldots,\beta_s$, and form the homogeneous vector-valued polynomial of bi-degree $(r,s)$ whose components are given by $$T^{a_1\ldots a_t\bar b_1\ldots\bar b_q}(z,\bar z):=
T_{\a_1\ldots\a_r\bar\b_1\ldots\bar\b_s}{}^{a_1\ldots a_t\bar b_1\ldots\bar b_q}z^{\alpha_1}\ldots z^{\alpha_r}\overline{z^{\beta_1}}\ldots\overline{z^{\beta_s}},
$$
where $z=(z^1,\ldots,z^n)$ and the usual summation convention is used. Then, the reader can check that the tensor is conformally flat if and only if all the polynomials $T^{a_1\ldots a_t\bar b_1\ldots\bar b_q}(z,\bar z)$ are divisible by the Hermitian form $g(z,\bar z):=g_{\alpha\bar\beta}z^\alpha\overline{z^\beta}$. Moreover, and importantly, a conformally flat tensor has the property that its covariant derivatives are again conformally flat, since one of the defining properties of the pseudohermitian connection is that $\nabla g_{\alpha\bar\beta}=0$. We shall use the terminology that $T_{\a_1\ldots\a_r\bar\b_1\ldots\bar\b_s}{}^{a_1\ldots a_t\bar b_1\ldots\bar b_q}\equiv 0 \mod \CFT$ if the tensor is conformally flat.

Now, the Gauss equation for the second fundamental form of a CR mapping $f\colon \bS^n\to \bS^N$ takes the following simple form (since the CR curvature tensors of $\bS^n$ and $\bS^N$ vanish):
\begin{equation}\label{Gauss}
g_{a\bar b}\omega_{\alpha}{}^a{}_{\nu}\omega_{\bar\beta}{}^{\bar b}{}_{\bar\mu}\equiv 0\mod\CFT.
\end{equation}
We proceed as in the proof of Theorem 5.1 in \cite{BEH08} and take repeated covariant derivatives in $\theta^{\gamma_r}$ and $\theta^{\bar \lambda_s}$ in the Gauss equation. By using the fact that $\omega_\alpha{}^a{}_{\beta;\bar\mu}$ is conformally flat (Lemma 4.1 in \cite{BEH08}) and the  commutation formula in Lemma 4.2 in \cite{BEH08}, we obtain the full family of Gauss equations, for any $r,s\geq 2$:
\begin{equation}\label{GaussFull}
g_{a\bar b}\omega_{\gamma_1}{}^a{}_{\gamma_2;\ldots\gamma_r}\omega_{\bar\lambda_1}{}^{\bar b}{}_{\bar\lambda_2;\ldots\bar\lambda_s}\equiv 0\mod\CFT.
\end{equation}
We now consider also the component vectors of higher order derivatives of $\Pi_f$ as elements of $\mathbb{C}^{N-n}\cong N_p^{1,0}S^n$ and define an increasing sequence of vector spaces
\begin{equation*}
E_{2}(p) \subseteq \ldots \subseteq E_{l}(p) \subseteq \ldots \subseteq \mathbb{C}^{N-n}\cong N_p^{1,0}\bS^n
\end{equation*}
by letting $E_{l}(p)$ be the span of the vectors
\begin{equation}\label{Eldef}
(\omega_{\gamma_{1}\:\:\gamma_{2};\gamma_{3}\ldots\gamma_{j}}^{\:\:a})_{a=n+1}^{N}, \qquad \forall\, 2 \leq j \leq l, \gamma_{j}\in \{1,\ldots,n\},
\end{equation}
evaluated at $p \in \bS^n$. We let $d_l(p)$ be the dimension of $E_l(p)$, and for convenience we set $d_1(p)=0$. As is mentioned in \cite{E12}, it is shown in \cite{EHZ04} that $d_l(p)$ defined in this way coincides with the $d_l(p)$ defined by (1.3) in \cite{E12}. By moving to a nearby point $p_0$ if necessary, we may assume that all $d_l=d_l(p)$ are locally constant near $p_0$ and
\begin{equation}\label{dimstab}
0=d_1<d_2<\ldots<d_{l_0}=d_{l_0+1}=\ldots\leq N-n
\end{equation}
for some $1\leq l_0\leq N-n+1$ (with $l_0=1$ if $d_2=0$ near such generic $p_0$). The mapping $f$ is said to be constantly $l_0$-degenerate of rank $d:=d_{l_0}\leq N-n$ at $p_0$; the codimension $N-n-d$ is called the degeneracy and if the degeneracy is $0$, then the mapping is also said to be $l_0$-nondegenerate.

For each integer $l\geq 2$, we form the $\bC^{N-n}$-valued, homogeneous polynomial $\Omega_{(l)}=(\Omega^1_{(l)},\ldots,\Omega^{N-n}_{(l)})$ in $z=(z^1,\ldots,z^n)\in \bC^n$ as follows:
\begin{equation}\label{Omega(l)}
\Omega^j_{(l)}(z):= \omega_{\gamma_{1}\:\:\gamma_{2};\gamma_{3}\ldots\gamma_{l}}^{\:\:a}z^{\gamma_1}\ldots z^{\gamma_l}, \quad a=n+j,
\end{equation}
and we define the {\it total second fundamental polynomial} $\Omega=(\Omega^1,\ldots,\Omega^{N-n})$ of $f$ near $p_0$ as follows:
\begin{equation}
\Omega^j(z):=\sum_{l=2}^{l_0}\Omega^j_{(l)}(z),
\end{equation}
where $l_0$ is the integer, defined above, where the dimensions $d_l$ stabilize. The following proposition is easily proved by using the fact that the rank of a matrix equals that of its transpose; the details are left to the reader.

\begin{proposition}\label{Omegarank} The rank $d=d_{l_0}$ of the $l_0$-degeneracy is also the linear rank of the polynomial mapping $\Omega(z)$, i.e., the dimension of the vector space in $\bC[z]$ spanned by the polynomials $\Omega^1(z),\ldots, \Omega^{N-n}(z)$.
\end{proposition}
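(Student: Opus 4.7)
The plan is to exhibit a single matrix $M$ whose column rank equals $d_{l_0}$ and whose row rank equals the linear rank of $\Omega$, and then to invoke the identity row rank $=$ column rank.

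First I would form the matrix $M$ with rows indexed by $j\in\{1,\ldots,N-n\}$ (equivalently the normal frame indices $a=n+j$) and columns indexed by tuples $(l,\gamma_1,\ldots,\gamma_l)$ with $2\le l\le l_0$ and $\gamma_k\in\{1,\ldots,n\}$, defined by
\[
M_{j,\,(l,\gamma_1,\ldots,\gamma_l)} \;=\; \omega_{\gamma_1}{}^{n+j}{}_{\gamma_2;\gamma_3\cdots\gamma_l}.
\]
By definition \eqref{Eldef}, the columns of $M$, viewed as vectors in $\bC^{N-n}$, are exactly those whose span is $E_{l_0}$, so the column rank of $M$ equals $\dim E_{l_0}=d_{l_0}$. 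On the other hand, each row of $M$ lists the coefficients of $\Omega^{j}(z)$ with respect to the linear surjection
\[
\phi\colon \bC^{\{(l,\vec\gamma)\}}\longrightarrow\bC[z],\qquad (c_{l,\vec\gamma})\longmapsto \sum_{l,\vec\gamma}c_{l,\vec\gamma}\,z^{\gamma_1}\cdots z^{\gamma_l},
\]
so that $\phi$ carries the row span of $M$ onto $V_\Omega:=\operatorname{span}(\Omega^1,\ldots,\Omega^{N-n})\subset\bC[z]$. Provided $\phi$ is injective on the row span, the linear rank of $\Omega$ equals the row rank of $M$, which by the standard row-rank $=$ column-rank identity equals $d_{l_0}$, and the proposition is proved.

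The only place the geometry of the sphere enters is in the injectivity of $\phi$ on the row span. A polynomial identity $\sum_j c_j\,\Omega^{j}\equiv 0$ is equivalent to vanishing of the \emph{full symmetrization} of $\sum_j c_j\,\omega_{\gamma_1}{}^{n+j}{}_{\gamma_2;\gamma_3\cdots\gamma_l}$ in $(\gamma_1,\ldots,\gamma_l)$, whereas $\phi$-injectivity asks for the vanishing of the tensor itself. These two conditions coincide once one knows that $\omega_{\gamma_1}{}^{a}{}_{\gamma_2;\gamma_3\cdots\gamma_l}$ is \emph{fully} symmetric in its lower holomorphic indices: the swap $\gamma_1\leftrightarrow\gamma_2$ is \eqref{SFF1}; commutativity of the successive holomorphic covariant derivatives $\gamma_3,\ldots,\gamma_l$ follows from the commutation formula of Lemma 4.2 in \cite{BEH08}, the relevant holomorphic–holomorphic CR curvature contributions vanishing on $\bS^n$; and the mixed symmetry of $\gamma_1,\gamma_2$ with the derivative indices is a CR Codazzi relation, which is again trivial for the flat ambient.

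The main obstacle is exactly this full symmetry of the tensor in its lower indices; once it is in hand, the rest of the argument is the routine "row rank equals column rank" bookkeeping alluded to in the author's hint, and a matching of two interpretations of the same rank-$d_{l_0}$ matrix.
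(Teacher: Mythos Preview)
Your approach is exactly the one the paper has in mind: it explicitly says the proposition ``is easily proved by using the fact that the rank of a matrix equals that of its transpose; the details are left to the reader,'' and your matrix $M$ is the natural choice. You go further than the paper by isolating the one nontrivial detail---that the identification of the row span with $V_\Omega$ requires full symmetry of $\omega_{\gamma_1}{}^a{}_{\gamma_2;\gamma_3\cdots\gamma_l}$ in its holomorphic lower indices---and you correctly point to \eqref{SFF1}, the commutation formula (Lemma~4.2 in \cite{BEH08}), and the Codazzi-type relation in the flat setting as the ingredients that supply it.
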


We now recall, as mentioned above, that we may choose the adapted, admissible CR coframes (near $p_0$ and $\hat p_0=f(p_0)$) in such a way that the Levi forms of $\bS^n$ and $\bS^N$ both equal the identity matrix. Let us now insist on such a choice of coframes. We then notice that the full family of Gauss equations in \eqref{GaussFull} for $r,s\leq l_0$ can be summarized in the following Sum-Of-Squares identity for the total second fundamental polynomial.

\begin{lemma}[Total polynomial Gauss equation]\label{GaussLemma} There exists a Hermitian polynomial $A(z,\bar z)$ such that
\begin{equation}\label{GaussLemmaId}
\norm\Omega(z)\norm^2=A(z,\bar z)\norm z\norm^2,
\end{equation}
where the notation $\norm\Omega(z)\norm^2:=\sum_{j=1}^{N-n} |\Omega^j(z)|^2$ introduced in the introduction has been used.
\end{lemma}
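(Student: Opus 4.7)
The plan is to unpack the definition of $\Omega(z)$ in \eqref{Omega(l)} and recognize the resulting bi-homogeneous pieces of $\norm\Omega(z)\norm^2$ as precisely the contractions that the full family of Gauss equations \eqref{GaussFull} forces to be conformally flat. Choosing coframes so that $g_{a\bar b}=\delta_{ab}$ and writing $a=n+j$, one has
\begin{equation*}
\norm\Omega(z)\norm^2 \;=\; \sum_{j=1}^{N-n}\Omega^j(z)\,\overline{\Omega^j(z)}\;=\;\sum_{r,s=2}^{l_0}\;g_{a\bar b}\,\omega_{\gamma_1}{}^{a}{}_{\gamma_2;\gamma_3\ldots\gamma_r}\,\omega_{\bar\lambda_1}{}^{\bar b}{}_{\bar\lambda_2;\bar\lambda_3\ldots\bar\lambda_s}\;z^{\gamma_1}\cdots z^{\gamma_r}\,\overline{z^{\lambda_1}}\cdots\overline{z^{\lambda_s}},
\end{equation*}
using the summation convention. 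Thus $\norm\Omega(z)\norm^2$ decomposes as a finite sum, over pairs $(r,s)$ with $r,s\ge 2$, of bi-homogeneous polynomials of bi-degree $(r,s)$ whose coefficient tensors are exactly those appearing in \eqref{GaussFull}.

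By \eqref{GaussFull}, each such coefficient tensor is conformally flat. Contraction with the commuting monomials $z^{\gamma_1}\cdots z^{\gamma_r}\overline{z^{\lambda_1}}\cdots\overline{z^{\lambda_s}}$ sees only the full symmetrization of this tensor in the $\gamma$'s and in the $\lambda$'s, and the symmetrization of a conformally flat tensor is again conformally flat (each $g_{\alpha_i\bar\beta_j}$ in the defining decomposition retains the same form under permutation of the $\alpha$'s and of the $\bar\beta$'s). The equivalence recalled in the text then ensures that each bi-homogeneous piece is divisible by $g(z,\bar z)=g_{\alpha\bar\beta}z^\alpha\overline{z^\beta}=\norm z\norm^2$.

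Summing the divisible contributions, $\norm\Omega(z)\norm^2$ is divisible by $\norm z\norm^2$ in $\bC[z,\bar z]$; defining $A(z,\bar z)$ to be the polynomial quotient yields \eqref{GaussLemmaId}. Since $\norm\Omega(z)\norm^2$ and $\norm z\norm^2$ are real-valued, so is $A$, hence $A$ is Hermitian in the sense used in the introduction. The only step requiring a line of care, as opposed to routine unpacking, is the symmetrization observation in the middle paragraph, which is needed because the conformal-flatness/divisibility equivalence stated in the text presumes full symmetry in the relevant indices; once this is noted, everything else is bookkeeping.
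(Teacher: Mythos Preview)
Your proof is correct and follows essentially the same route as the paper's: expand $\norm\Omega(z)\norm^2$ into its bi-homogeneous pieces, identify each piece with the contraction of the Gauss identity \eqref{GaussFull} against $z^{\gamma_1}\cdots z^{\gamma_r}\overline{z^{\lambda_1}}\cdots\overline{z^{\lambda_s}}$, and use conformal flatness to extract the common factor $\norm z\norm^2$. The symmetrization remark you add is a legitimate caution (since the divisibility criterion in the text is stated for symmetric tensors), though in fact each summand $g_{\alpha_i\bar\beta_j}(T_{ij})_{\ldots}$ in the conformally flat decomposition already produces a $\norm z\norm^2$ factor upon contraction regardless of symmetry, so the paper's terser sketch is justified without it.
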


\begin{proof} The proof consists of multiplying the identities \eqref{GaussFull} by $z^{\gamma_1}\ldots z^{\gamma_r}\overline{z^{\lambda_1}\ldots  z^{\lambda_s}}$ and summing according to the summation convention. The conformally flat tensors on the right hand sides all contain a factor of $\norm z\norm^2$. The proof is then completed by comparing the polynomial identities obtained in this way to the result of expanding the left hand side of \eqref{GaussLemmaId} and collecting terms of a fixed bidegree $(r,s)$. The details are left to the reader.
\end{proof}

\section{Proof of Theorem \ref{MainThm}}\label{Proof}

We shall prove Conjecture \ref{HJYConj} under the assumption that the conclusion of Conjecture \ref{SOSConj} holds. We quote first Theorem 1.1 in \cite{E12}, in the special case of CR mappings $f\colon \bS^n\to \bS^N$:

\begin{theorem}[\cite{E12}]\label{Thm1.1} Let $f\colon \bS^n \to  \bS^N$ be a smooth CR mapping and the dimensions $d_l(p)$ be as defined in Section $\ref{GaussSec}$. Let $U$ be an open subset of $\bS^n$ on which $f$ is constantly $l_0$-degenerate, and such that $d_l=d_l(p)$, for $2\leq l\leq l_0$, are constant on $U$ and \eqref{dimstab} holds. Assume that there are integers $0\leq k_2, k_3,\ldots,k_{l_0}\leq n-1$, such that:
\begin{equation}
\begin{aligned}\label{conds0}
d_l-d_{l-1} <&\sum_{j=0}^{k_l}(n-j),\quad l=2,\ldots, l_0,\quad (d_1=0)\\
k:=&\sum_{l=2}^{l_0}k_l<n.
\end{aligned}
\end{equation}
Then $f(\bS^n)$ is contained in a complex affine subspace $A^{n+d+k+1}$ of dimension $n+d+k+1$, where $k$ is defined in \eqref{conds0} and $d:=d_{l_0}$ is the rank of the $l_0$-degeneracy.
\end{theorem}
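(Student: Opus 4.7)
The plan is to prove Theorem \ref{Thm1.1} by a two-level induction: an outer induction on $l = 2, \ldots, l_0$ that peels off one level at a time of the jet of $f$, and, inside each level, an inner iteration that extracts $k_l$ "flat" normal directions via repeated application of Huang's Lemma to the Gauss equation at that level. The eventual goal is to show that, after a suitable unitary change of frame in $N^{1,0}\bS^n \cong \bC^{N-n}$, there is a complex $(N-n-d-k)$-dimensional subbundle on which the pseudohermitian connection forms $\omega_\alpha{}^a$ of $f$ all vanish; integrating this flat direction along $\bS^n$ then forces $f(\bS^n)$ to lie in an affine subspace of dimension $n+d+k+1$.

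First I would fix an adapted admissible coframe (Section \ref{GaussSec}) in which both Levi forms equal the identity, and choose an orthonormal basis of the normal space $\bC^{N-n}$ adapted to the flag $E_2(p) \subset \cdots \subset E_{l_0}(p)$; by the constancy of the $d_l$ on $U$ this can be arranged smoothly. In this basis, the level-$l$ polynomial $\Omega_{(l)}^j(z)$ defined in \eqref{Omega(l)} has nontrivial $j$-components only in indices $a \in \{n+d_{l-1}+1, \ldots, n+d_l\}$, so the number of linearly independent components at level $l$ is exactly $d_l - d_{l-1}$. The polynomial Gauss equation obtained by restricting \eqref{GaussFull} to $r = s = l$ and multiplying by $z^{\gamma_1}\cdots z^{\gamma_l}\overline{z^{\lambda_1}\cdots z^{\lambda_l}}$ yields, for each $l$, an SOS identity of the form $\norm \Omega_{(l)}(z)\norm^2 \equiv B_l(z,\bar z)\norm z\norm^2$ with $B_l$ Hermitian, exactly as in Lemma \ref{GaussLemma}.

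Next, at each level $l$, I would apply Huang's Lemma together with its Ebenfelt--Huang--Zaitsev refinements (cf.\ \cite{EHZ05}) to this level-$l$ SOS identity. The hypothesis $d_l - d_{l-1} < \sum_{j=0}^{k_l}(n-j)$ says precisely that the linear rank of $\Omega_{(l)}$ is below the $(k_l+1)$-th "Huang threshold." Iterating Huang's Lemma $k_l+1$ times (after unitary rotations within $E_l(p)\ominus E_{l-1}(p)$ at each step to isolate the lowest-rank subsystems) would produce $k_l$ directions $\theta^{\mu_1}, \ldots, \theta^{\mu_{k_l}}$ along which the level-$l$ components $\omega_{\gamma_1}{}^a{}_{\gamma_2;\gamma_3\ldots\gamma_l}$ vanish modulo conformally flat tensors. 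Cross-level commutation formulas (Lemma 4.2 in \cite{BEH08}) then propagate this vanishing upward: if the level-$l$ polynomial is flat along $\theta^{\mu}$, the covariant derivative formulas force the higher-level components in the same direction to be conformally flat too. Summing the contributions across $l = 2, \ldots, l_0$ and using $k = \sum k_l < n$ to guarantee that the chosen flat directions can be realized simultaneously inside a single $(n-k)$-dimensional complex distribution on $\bS^n$, I obtain a rank-$(N-n-d-k)$ sub-bundle of $N^{1,0}\bS^n$ on which all connection forms $\omega_\alpha{}^a$ of $f$ vanish identically.

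Finally, the vanishing of $\omega_\alpha{}^a$ on this sub-bundle, combined with the vanishing of $f^*\hat\theta^a = 0$ built into the adapted coframe, is exactly the Frobenius-integrable system saying that the differential of $f$ in those $N-n-d-k$ ambient coordinates is identically zero, so those coordinates of $f$ are constant. Equivalently, the image $f(\bS^n)$ is contained in the common zero set of $N-n-d-k$ complex affine equations, giving the affine subspace $A^{n+d+k+1}$ in the conclusion. The main obstacle will be the bookkeeping of the iterated Huang-Lemma step inside each level: one must carefully verify that the "cancellations modulo $\CFT$" extracted at the level-$l$ SOS identity are compatible with the unitary normalizations already imposed at levels $2, \ldots, l-1$, and that the directions $\theta^\mu$ along which flatness holds can be chosen consistently across all levels using only $k = \sum k_l < n$ of the $n$ CR tangent directions; this is where the strict inequality $k<n$ in \eqref{conds0} and the constancy of the $d_l$ on $U$ both become essential.
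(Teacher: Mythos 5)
The paper does not actually prove this statement: it is quoted (in the special case of source manifold $\bS^n$) from Theorem 1.1 of \cite{E12}, so there is no in-paper argument to compare against. Judged against the proof in \cite{E12}, your sketch has the right general scaffolding --- adapted coframes, the graded spaces $E_l$, level-by-level use of the Gauss equation and Huang-type rank lemmas, and a final integration step --- but it has a genuine gap at the central point, which is exactly where the integer $k$ enters.

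The gap is this. Finding normal directions in which the second fundamental form and all its covariant derivatives vanish is not the issue: the orthogonal complement of $E_{l_0}$ in $N^{1,0}\bS^n$ already has rank $N-n-d$ and does this by definition of $d=d_{l_0}$ and the stabilization \eqref{dimstab}. The difficulty is that this subbundle need not be \emph{parallel} with respect to the normal components $\hat\omega_a{}^b$ of the pulled-back Tanaka--Webster connection, and without parallelism the Frobenius/ODE argument that places $f(\bS^n)$ in an affine subspace does not run. The content of Theorem \ref{Thm1.1} is precisely that the failure of parallelism can be absorbed into $k=\sum_l k_l$ extra dimensions, yielding a parallel subbundle of rank $N-n-d-k$ annihilated by the second fundamental form; this requires controlling the connection components mixing $E_{l_0}^{\perp}$ with $E_{l_0}$, which your sketch never addresses. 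Relatedly, the $k_l$ directions you extract by iterating Huang's Lemma are \emph{tangential} directions $\theta^{\mu}$ on $\bS^n$ (this is the ``geometric rank'' picture of \cite{Huang03}), whereas the final count $N-n-d-k$ lives in the \emph{normal} bundle; the bridge between the two is the key technical lemma of \cite{E12} and is missing here. Two smaller points: the level-$l$ derivative tensors generally do have components in the lower space $E_{l-1}$, so the clean level-wise SOS identity for the graded piece $E_l\ominus E_{l-1}$ requires a projection argument, not merely restriction to bidegree $(l,l)$; and the final step is not that ``those coordinates of $f$ are constant'' but rather the standard argument (as in \cite{EHZ04}) that a parallel normal subbundle killed by the second fundamental form forces the ambient position vector to satisfy a linear system whose solution space spans an affine subspace of dimension $n+d+k+1$.
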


\begin{remark} {\rm The integers $k_2,\ldots, k_{l_0}$ become invariants of the mapping $f$ if we require them to be minimal in an obvious way. The invariant $k_2$ was introduced in \cite{Huang03} and called there the geometric rank of $f$. This geometric rank plays an important role in \cite{Huang03}, \cite{HuangJiXu06}, and \cite{HuangJiYin12}.
}
\end{remark}

\begin{proof}[Proof of Theorem $\ref{MainThm}$] We assume now that there is a mapping $f\colon \bS^n \to  \bS^N$ with codimension $N-n\in I_\kappa$ for some $\kappa\leq \kappa_0<n$. Thus, we have
$$
N-n\leq \sum_{i=0}^{\kappa-1}(n-i)-1.
$$
We consider an open subset $U\subset \bS^n$ as in Theorem \ref{Thm1.1}.
Since the rank of the $l_0$-degeneracy satisfies $d\leq N-n$, we then have
\begin{equation}\label{dest}
d\leq \sum_{i=0}^{\kappa-1}(n-i)-1,
\end{equation}
in Theorem \ref{Thm1.1}. By Proposition \ref{Omegarank}, $d$ is also the linear rank of the total second fundamental polynomial $\Omega(z)$, and by Lemma \ref{GaussLemma}, an SOS identity of the form \eqref{GaussLemmaId} holds. If we now assume that the SOS Conjecture \ref{SOSConj} holds, then \eqref{dest} implies that in fact
\begin{equation}\label{SOScons}
d\leq (\kappa-1)n.
\end{equation}
It is also clear from \eqref{dest} that there exist integers $0\leq k_l\leq \kappa-1$ such that the first identity in \eqref{conds0} hold. We shall choose the $k_j$ minimal, so that in addition we have
\begin{equation}\label{klmin}
d_l-d_{l-1}\geq \sum_{j=0}^{k_l-1}(n-j),
\end{equation}
where the right hand side is understood to be $0$ if $k_l=0$. We claim that
\begin{equation}\label{kclaim}
k:=\sum_{l=2}^{l_0}k_l \leq \kappa-1.
\end{equation}
If we can prove this claim, then it follows from Theorem \ref{Thm1.1}, since $\kappa\leq \kappa_0<n$, that $f(\bS^n)$ is contained in a complex affine subspace $A^{N_0+1}$ of dimension $N_0=n+d+k$, and the codimension satisfies, by \eqref{SOScons} and \eqref{kclaim},
$$
N_0-n=d+k\leq (\kappa-1)n+\kappa-1,
$$
which is precisely the desired conclusion in the Gap Conjecture \ref{HJYConj}. Thus, we proceed to prove \eqref{kclaim}. Let us denote by $g(j)$ the non-increasing function
\begin{equation}
g(j)=\left\{
\begin{aligned}
n-j,&\quad 0\leq j< n\\
0,&\quad j\geq n.
\end{aligned}
\right.
\end{equation}
Using the fact that we have set $d_1=0$, we can telescope $d$ as follows
\begin{equation}
d=(d_{l_0}-d_{l_0-1})+\ldots (d_2-d_1)=\sum_{l=2}^{l_0}(d_l-d_{l-1}),
\end{equation}
and deduce from \eqref{klmin} that
\begin{equation}\label{dest2}
d\geq \sum_{l=2}^{l_0}\sum_{j=0}^{k_l-1}(n-j)=\sum_{l=2}^{l_0}\sum_{j=0}^{k_l-1}g(j).
\end{equation}
Since $g(j)$ is non-increasing, we can estimate
\begin{equation}\label{shift}
\sum_{l=2}^{l_0}\sum_{j=0}^{k_l-1}g(j)\geq
\sum_{l=2}^{l_0}\sum_{j=0}^{k_l-1}g\left(j+m_l\right),
\end{equation}
where we have set $m_2=0$ and, for $3\leq l\leq l_0$,
\begin{equation}
m_l:=\sum_{i=2}^{l-1}k_{i}.
\end{equation}
Substituting $i=j+m_l$ in \eqref{shift}, we deduce from \eqref{dest2}
\begin{equation}
d\geq \sum_{l=2}^{l_0}\sum_{i=m_l}^{m_l+k_l-1}g(i)=\sum_{l=2}^{l_0}\sum_{i=m_l}^{m_{l+1}-1}g(i)=
\sum_{i=0}^{m_{l_0+1}-1}g(i).
\end{equation}
Since $m_{l_0+1}=k$, we conclude that
\begin{equation}
d\geq
\sum_{i=0}^{k-1}g(i),
\end{equation}
and since $k<n$, we also have $g(i)=n-i$ for $i=1,\ldots, k-1$, and therefore we can write
\begin{equation}
\sum_{i=0}^{k-1}(n-i)\leq d.
\end{equation}
By comparing this with \eqref{dest}, we conclude that $k-1<\kappa-1$, which establishes the claim \eqref{kclaim}. This completes the proof of Theorem \ref{MainThm}.

\end{proof}


\begin{thebibliography}{10}

\bibitem{Alexander74}
H.~Alexander.
\newblock Holomorphic mappings from the ball and polydisc.
\newblock {\em Math. Ann.}, 209:249--256, 1974.

\bibitem{BEH08}
M.~Salah Baouendi, Peter Ebenfelt, and Xiaojun Huang.
\newblock Super-rigidity for {CR} embeddings of real hypersurfaces into
  hyperquadrics.
\newblock {\em Adv. Math.}, 219(5):1427--1445, 2008.

\bibitem{CatlinDangelo96}
D.~Catlin and J.~D'Angelo.
\newblock A stabilization theorem for {Hermitian} forms and applications to
  holomorphic mappings.
\newblock {\em Math. Res. Let.}, 3:149--166, 1996.

\bibitem{CatlinJPD99}
David~W. Catlin and John~P. D'Angelo.
\newblock An isometric imbedding theorem for holomorphic bundles.
\newblock {\em Math. Res. Lett.}, 6(1):43--60, 1999.

\bibitem{CS90}
J.~A. Cima and T.~J. Suffridge.
\newblock Boundary behavior of rational proper maps.
\newblock {\em Duke Math. J.}, 60(1):135--138, 1990.

\bibitem{JPD11}
John~P. D'Angelo.
\newblock Hermitian analogues of {H}ilbert's 17-th problem.
\newblock {\em Adv. Math.}, 226(5):4607--4637, 2011.

\bibitem{JPDLebl09}
John~P. D'Angelo and Ji{\v{r}}{\'{\i}} Lebl.
\newblock Complexity results for {CR} mappings between spheres.
\newblock {\em Internat. J. Math.}, 20(2):149--166, 2009.

\bibitem{JPDVar04}
John~P. D'Angelo and Dror Varolin.
\newblock Positivity conditions for {H}ermitian symmetric functions.
\newblock {\em Asian J. Math.}, 8(2):215--231, 2004.

\bibitem{E15}
Peter Ebenfelt.
\newblock Local holomorphic isometries of a modified projective space into a
  standard projective space; rational conformal factors.
\newblock {\em Math. Ann., {\rm to appear}}.

\bibitem{E12}
Peter Ebenfelt.
\newblock Partial rigidity of degenerate {CR} embeddings into spheres.
\newblock {\em Adv. Math.}, 239:72--96, 2013.

\bibitem{EHZ04}
Peter Ebenfelt, Xiaojun Huang, and Dmitry Zaitsev.
\newblock Rigidity of {CR}-immersions into spheres.
\newblock {\em Comm. Anal. Geom.}, 12(3):631--670, 2004.

\bibitem{EHZ05}
Peter Ebenfelt, Xiaojun Huang, and Dmitry Zaitsev.
\newblock The equivalence problem and rigidity for hypersurfaces embedded into
  hyperquadrics.
\newblock {\em Amer. J. Math.}, 127(1):169--191, 2005.

\bibitem{Faran86}
James~J. Faran.
\newblock The linearity of proper holomorphic maps between balls in the low
  codimension case.
\newblock {\em J. Differential Geom.}, 24(1):15--17, 1986.

\bibitem{Forstneric89}
Franc Forstneri{\v{c}}.
\newblock Extending proper holomorphic mappings of positive codimension.
\newblock {\em Invent. Math.}, 95(1):31--61, 1989.

\bibitem{GrHa13}
Dusty Grundmeier and Jennifer Halfpap~Kacmarcik.
\newblock An application of {M}acaulay's estimate to sums of squares problems
  in several complex variables.
\newblock {\em Proc. Amer. Math. Soc.}, 143(4):1411--1422, 2015.

\bibitem{GruLV14}
Dusty Grundmeier, Ji{\v{r}}{\'{\i}} Lebl, and Liz Vivas.
\newblock Bounding the rank of {H}ermitian forms and rigidity for {CR} mappings
  of hyperquadrics.
\newblock {\em Math. Ann.}, 358(3-4):1059--1089, 2014.

\bibitem{Hamada05}
Hidetaka Hamada.
\newblock Rational proper holomorphic maps from {$\bold B^n$} into {$\bold
  B^{2n}$}.
\newblock {\em Math. Ann.}, 331(3):693--711, 2005.

\bibitem{Huang99}
Xiaojun Huang.
\newblock On a linearity problem for proper holomorphic maps between balls in
  complex spaces of different dimensions.
\newblock {\em J. Differential Geom.}, 51:13--33, 1999.

\bibitem{Huang03}
Xiaojun Huang.
\newblock On a semi-rigidity property for holomorphic maps.
\newblock {\em Asian J. Math.}, 7(4):463--492, 2003.

\bibitem{HuangJi01}
Xiaojun Huang and Shanyu Ji.
\newblock Mapping $\mathbb{B}^n$ into $\mathbb{B}^{2n-1}$.
\newblock {\em Inventiones Mathematicae}, 145:219--250, 2001.
\newblock 10.1007/s002220100140.

\bibitem{HuangJiXu06}
Xiaojun Huang, Shanyu Ji, and Dekang Xu.
\newblock A new gap phenomenon for proper holomorphic mappings from {$B^n$}
  into {$B^N$}.
\newblock {\em Math. Res. Lett.}, 13(4):515--529, 2006.

\bibitem{HuangJiYin09}
Xiaojun Huang, Shanyu Ji, and Wanke Yin.
\newblock Recent progress on two problems in several complex variables.
\newblock {\em Proceedings of the ICCM 2007, International Press}, Vol
  I:563--575, 2009.

\bibitem{HuangJiYin12}
Xiaojun Huang, Shanyu Ji, and Wanke Yin.
\newblock On the third gap for proper holomorphic maps between balls.
\newblock {\em Math. Ann.}, 358(1-2):115--142, 2014.

\bibitem{HuangY13}
Xiaojun Huang and Yuan Yuan.
\newblock Holomorphic isometry from a {K}\"ahler manifold into a product of
  complex projective manifolds.
\newblock {\em Geom. Funct. Anal.}, 24(3):854--886, 2014.

\bibitem{Pincuk74}
S.~I. Pin\v{c}uk.
\newblock On proper holomorphic mappings of strictly pseudoconvex domains.
\newblock {\em Siberian Math. J.}, 15:909--917, 1974.

\bibitem{Poincare07}
H.~Poincar{\'e}.
\newblock Les fonctions analytiques de deux variables et la repr{\'e}sentation
  conforme.
\newblock {\em Rend. Circ. Mat. Palermo}, 23(2):185--220, 1907.

\bibitem{Quillen68}
Daniel~G. Quillen.
\newblock On the representation of hermitian forms as sums of squares.
\newblock {\em Invent. Math.}, 5:237--242, 1968.

\bibitem{Webster78}
S.~M. Webster.
\newblock Pseudo-{H}ermitian structures on a real hypersurface.
\newblock {\em J. Differential Geom.}, 13(1):25--41, 1978.

\end{thebibliography}

\def\cprime{$'$}

\end{document}